\newtheorem{theorem}{Theorem}
\theoremstyle{definition}
\newtheorem{proposition}[theorem]{Proposition}
\newtheorem{definition}[theorem]{Definition}
\newtheorem*{theorem*}{Theorem}
\newtheorem*{corollary*}{Corollary}
\newtheorem*{remark*}{Remark}
\newtheorem*{lemma*}{Lemma}
\title[Estimates for Green Operators on Compact Heisenberg
Manifolds]{Schatten and Sobolev Estimates for Green
  Operators on Compact Heisenberg Manifolds}
\author{Colin Fan} 
\address[Colin Fan]{Rutgers University--New Brunswick, Department of Mathematics, Piscataway, NJ 08854, USA}
\email{colin.fan@rutgers.edu}
\thanks{This work is supported by NSF DMS-1950102.}
\subjclass[2020]{Primary 32V20; Secondary 32W10}
\keywords{Kohn Laplacian, Schatten estimates, Sobolev estimates}
\begin{document}
\begin{abstract}
  Let $M = \Gamma \setminus \mathbb{H}_d$
  be a compact quotient of
  the $d$-dimensional Heisenberg group $\mathbb{H}_d$ by a lattice
  subgroup $\Gamma$. We give Schatten and Sobolev
  estimates for the Green operator $\mathcal{G}_\alpha$
  associated to a fixed element of a  family of second order differential
  operators $\left\{ \mathcal{L}_\alpha \right\}$ on $M$. In particular, it follows that the Kohn Laplacian on functions on $M$ is subelliptic. Our main tool is Folland's description of
  the spectrum of $\mathcal{L}_\alpha$.
\end{abstract}

\maketitle

\section{Introduction}

\subsection{Motivation}
Associated to any CR manifold $M$ is the Kohn Laplacian, $\square_b$, which is densely defined on the space of square integrable $\left(p,q\right)$-forms.
The role of the Kohn Laplacian on a CR manifold $M$ is analogous to that of the Hodge Laplacian on a Riemannian manifold. Thus, taking inspiration from the fruitful methods of spectral analysis in the Riemannian setting, a natural consideration is whether the Kohn Laplacian encodes information about $M$. For example, there are CR analogs of Weyl's law and the Minakshisundaram asymptotic expansion that appear in Riemannian geometry \cite{Stanton}. A difficulty that arises in CR geometry is that the Kohn Laplacian is not elliptic -- a key feature of the Hodge Laplacian in the Riemannian setting.
However, when $M$ is strongly pseudoconvex and of hypersurface type, it is well known that the Kohn Laplacian is subelliptic on $q$-forms that are not functions or of top degree. For functions, the Kohn Laplacian need not be hypoelliptic, even with the assumption of strong pseudoconvexity (consider the Heisenberg group). In this paper, it will be shown that the Kohn Laplacian on functions is subelliptic in the case of compact Heisenberg manifolds
(compact quotients of the $d$-dimensional Heisenberg group by a lattice subgroup). Note that Sobolev gain estimates and subellipticity do not follow immediately from \cite{Kohn65} and \cite{Shaw2005HlderAl}, as estimates on functions are only obtained for non-pseudoconvex CR manifolds.

We consider compact Heisenberg manifolds because the
spectrum of the Kohn Laplacian for the Heisenberg group is not
discrete, but it is for compact Heisenberg manifolds. Moreover,
the spectrum on these manifolds was computed explicitly by
Folland in \cite{Folland2004CompactHM}, and therefore explicit calculations can be performed. In particular,
Folland computed the
spectrum of a family of second order differential operators $\left\{
  \mathcal{L}_\alpha 
\right\}$ that encodes
information about $\square_b$ on compact Heisenberg manifolds. Explicitly, if $\square_b$ acts
on $\left( 0,q\right)$-forms, then
\[\square_b \left( \sum_{\left| J \right| = q} f_J d
    \overline{z}^J\right) = \sum_{\left| J \right| = q}
  \mathcal{L}_{d - 2q} f_J d \overline{z}^J.\]

Using Folland's explicit spectral decomposition, we give
Schatten (Theorem \ref{thm:schatten}) and Sobolev norm (Theorem \ref{thm:sobolev}) estimates for the family of complex Green type operators $\left\{ \mathcal{G}_\alpha \right\}$
corresponding to $\left\{ \mathcal{L}_\alpha \right\}$. For brevity, we will refer to these operators as Green operators. In particular, when $\alpha = d$, we obtain estimates for the complex Green operator associated to the Kohn Laplacian on functions, implying the Kohn Laplacian is subelliptic and therefore hypoelliptic.

\subsection{Spectrum of  operators on compact
  Heisenberg manifolds}
The Heisenberg group comes equipped with two left-invariant
self-adjoint operators $\mathcal{L}_0$ and $i ^{-1} T$. These
operators strongly commute and therefore yield a joint
spectral decomposition for $L^2 \left( M \right) $. Moreover, the
family of operators given by $\mathcal{L}_\alpha = \mathcal{L}_0 +
i \alpha T$ for $\alpha\in \mathbb{R}$ can also play the role of
the standard Laplacian on the Heisenberg group as $\mathcal{L}_\alpha$ is also characterized by the symmetries of the Heisenberg group \cite{Stein}. We refer to \cite{Folland2004CompactHM} and \cite{fan2021tauberian} for further definitions
and details on compact Heisenberg manifolds, and the spectral
decomposition for $\mathcal{L}_\alpha$. We follow their notation, and we restate the main tools.

\begin{theorem*}[\cite{Folland2004CompactHM}]
  Let $M = \Gamma\setminus \mathbb{H}_d$ be a compact Heisenberg manifold and denote the center of $\Gamma$ by $\left(0,0,c\mathbb{Z} \right)$, $c>0$. Furthermore, let $\Lambda'$ be the dual lattice of the lattice $\Lambda = \pi \left(\Gamma\right)$, where $\pi: \mathbb{H}_d \to \mathbb{C}^d$ is the quotient map $\pi \left(z,t\right) = z$.
  The joint spectrum of $\mathcal{L}_0$ and $i ^{-1} T$ on $L^2
  \left( M\right)$ is
  \[\left\{ \left( \frac{ \pi \left| n \right|}{2c} \left( d + 2j
      \right), \frac{ \pi n }{2c} \right): j \in \mathbb{Z}_{\geq
      0}, n \in \mathbb{Z}\setminus \left\{ 0 \right\}\right\}
  \cup \left\{ \left( \frac{ \pi}{2} \left| \xi \right|^2,0
    \right): \xi \in \Lambda'\right\}\]
and the multiplicity of $ \left( \frac{ \pi \left| n \right|}{2c}
  \left( d + 2j \right), \frac{ \pi n }{2c} \right)$ is
\[\left| n\right|^d L \binom{j + d -
    1}{d - 1}\]
where $L$ is a constant determined by $\Gamma$. Moreover, the multiplicity of
an eigenvalue coming from the second set is dependent on the lattice
structure of $\Lambda'$.  
\end{theorem*}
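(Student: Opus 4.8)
The plan is to recover this joint spectral decomposition by decomposing the right regular representation $R$ of $\mathbb{H}_d$ on $L^2(M)$ into irreducibles. Since $\Gamma$ is cocompact, $L^2(\Gamma\setminus\mathbb{H}_d)$ splits as a discrete Hilbert-space direct sum $\bigoplus_\pi m_\pi\,\mathcal{H}_\pi$ of irreducible unitary representations with finite multiplicities $m_\pi$. Because both $\mathcal{L}_0=-\tfrac12\sum_k(Z_k\bar Z_k+\bar Z_k Z_k)$ and $T$ are built from the Lie algebra, they commute with $R$ and therefore act within each isotypic component; it then suffices to diagonalize them on a single copy of each $\mathcal{H}_\pi$ and to count the multiplicities $m_\pi$. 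The first task is essentially representation-theoretic bookkeeping, and the second is the genuine computation.

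First I would classify the representations that occur. By the Stone--von Neumann theorem the irreducible unitaries of $\mathbb{H}_d$ are either infinite-dimensional, indexed by a nonzero central character and modeled on $L^2(\mathbb{R}^d)$ (Schr\"odinger model), or one-dimensional characters factoring through the abelianization $\mathbb{C}^d$. A representation appearing in $L^2(M)$ must be trivial on the central lattice $(0,0,c\mathbb{Z})$, which quantizes the central character to a discrete family indexed by $n\in\mathbb{Z}$; in Folland's normalization these are exactly the values $\tfrac{\pi n}{2c}$ for $i^{-1}T$. On each infinite-dimensional $\pi_n$ ($n\neq 0$), $T$ acts as the scalar $i\tfrac{\pi n}{2c}$, while $\mathcal{L}_0$ becomes a scaled Hermite operator proportional to $\sum_{k=1}^d(-\partial_{x_k}^2+x_k^2)$, whose spectrum on $L^2(\mathbb{R}^d)$ is $\tfrac{\pi|n|}{2c}(d+2j)$ for $j\geq 0$ with internal multiplicity equal to the number of multi-indices $(j_1,\dots,j_d)$ of total degree $j$, namely $\binom{j+d-1}{d-1}$. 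On the one-dimensional characters $T$ acts as $0$ and $\mathcal{L}_0$ reduces to the flat Laplacian on the torus $\mathbb{C}^d/\Lambda$, whose eigenvalues are $\tfrac{\pi}{2}|\xi|^2$ for $\xi\in\Lambda'$. This reproduces both sets in the statement and the binomial factor.

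The hardest step, and the one I expect to be the main obstacle, is computing the multiplicity $m_{\pi_n}$ of each infinite-dimensional $\pi_n$, which supplies the factor $|n|^d L$. I would carry this out by Fourier expanding $F\in L^2(M)$ in the central variable, $F=\sum_n F_n$, where left $\Gamma$-invariance forces periodicity under the center and each $F_n$ transforms by the $n$-th central character. For $n\neq 0$ the twisting coming from the group law forces the $F_n$ to be theta-like sections of a line bundle over $\mathbb{C}^d/\Lambda$, and the space of these is finite-dimensional of dimension $|n|^d L$, where $L$ is the covolume/index constant attached to $\Lambda$ (the Heisenberg analog of the dimension formula $\dim=\mathrm{Pf}$ of the polarization for theta functions on a polarized abelian variety). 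Equivalently one invokes the multiplicity formula for $L^2$ of a nilmanifold (Richardson, Auslander--Brezin) and evaluates it for $\mathbb{H}_d$. Multiplying this representation multiplicity $|n|^d L$ by the internal Hermite multiplicity $\binom{j+d-1}{d-1}$ yields the stated total $|n|^d L\binom{j+d-1}{d-1}$. The multiplicities for the second set reduce to counting vectors $\xi\in\Lambda'$ of a prescribed length, which genuinely depends on the fine arithmetic of $\Lambda'$ and so, as asserted, cannot be made uniform beyond the parametrization by $|\xi|^2$.
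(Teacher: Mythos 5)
Your proposal is essentially the argument of the cited source: the paper states this theorem without proof, attributing it to \cite{Folland2004CompactHM}, and Folland's proof runs along exactly your lines --- discrete decomposition of $L^2\left(\Gamma\setminus\mathbb{H}_d\right)$ into irreducibles with the representation multiplicity $\left|n\right|^d L$ extracted by a Weil--Brezin/theta-function analysis (equivalently the Auslander--Brezin/Richardson nilmanifold multiplicity formula you invoke), Hermite diagonalization of $\mathcal{L}_0$ in each Schr\"odinger model giving $\frac{\pi\left|n\right|}{2c}\left(d+2j\right)$ with internal multiplicity $\binom{j+d-1}{d-1}$, and the flat Laplacian on $\mathbb{C}^d/\Lambda$ for the one-dimensional characters. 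The only point to repair is the phrase that $\mathcal{L}_0$ and $T$ ``commute with $R$'': left-invariant operators are $dR\left(u\right)$ for $u$ in the universal enveloping algebra and commute with $R$ only when $u$ is central (as for $T$), but they do preserve every closed $R$-invariant subspace, acting on the smooth vectors of each irreducible summand, which is all your per-irreducible diagonalization actually uses.
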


Since $\mathcal{L}_0$ and $i^{-1} T$ are self-adjoint, and strongly commute, we obtain the following spectrum for $\mathcal{L}_\alpha$. 
\begin{corollary*}[\cite{Folland2004CompactHM}]
  For $\alpha\in \mathbb{R}$, the spectrum of $\mathcal{L}_\alpha$
  on $M$ is
  \[ \underbrace{\left\{ \frac{ \pi \left| n
          \right|}{2c} \left( d + 2j - \alpha\operatorname{sgn} n
  \right): j \in \mathbb{Z}_{\geq 0}, n \in \mathbb{Z}\setminus
  \left\{ 0 \right\}\right\}}_{\text{type } \left(a\right)} \cup
\underbrace{\left\{ \frac{ \pi}{2} \left|\xi \right|^2: \xi \in
    \Lambda'\right\}}_{\text{type } \left(b\right)}.\] 
\end{corollary*}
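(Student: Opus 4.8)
The plan is to read off the spectrum of $\mathcal{L}_\alpha$ directly from the joint spectrum in the preceding theorem, using the functional calculus for the strongly commuting pair $\left( \mathcal{L}_0, i^{-1}T \right)$. Since both operators are self-adjoint and strongly commute, they admit a joint projection-valued measure on $\mathbb{R}^2$ supported exactly on the listed joint spectrum, and $L^2\left( M \right)$ decomposes into their joint eigenspaces. The key algebraic observation is that on a joint eigenvector $v$ with $\mathcal{L}_0 v = \lambda v$ and $i^{-1}T v = \mu v$ (equivalently $Tv = i\mu v$), one has
\[
  \mathcal{L}_\alpha v = \left( \mathcal{L}_0 + i\alpha T \right) v = \left( \lambda + i\alpha \cdot i\mu \right) v = \left( \lambda - \alpha \mu \right) v,
\]
so $\mathcal{L}_\alpha$ is precisely the operator $g\left( \mathcal{L}_0, i^{-1}T \right)$ for the linear symbol $g\left( \lambda, \mu \right) = \lambda - \alpha \mu$. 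Note also that $T$ is skew-adjoint (this is what self-adjointness of $i^{-1}T$ encodes), so $i\alpha T$ is self-adjoint for real $\alpha$ and hence $\mathcal{L}_\alpha$ is self-adjoint with a well-defined spectrum.

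Next I would apply the spectral mapping theorem, so that the spectrum of $\mathcal{L}_\alpha$ is the image under $g$ of the joint spectrum, and then evaluate $g$ on each family. For a type $(a)$ point $\left( \frac{\pi \left| n \right|}{2c}\left( d + 2j \right), \frac{\pi n}{2c} \right)$, substituting $n = \left| n \right| \operatorname{sgn} n$ gives
\[
  g\!\left( \frac{\pi \left| n \right|}{2c}\left( d + 2j \right),\, \frac{\pi n}{2c} \right) = \frac{\pi \left| n \right|}{2c}\left( d + 2j \right) - \alpha \frac{\pi n}{2c} = \frac{\pi \left| n \right|}{2c}\bigl( d + 2j - \alpha \operatorname{sgn} n \bigr),
\]
which is exactly the claimed type $(a)$ set. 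For a type $(b)$ point $\left( \frac{\pi}{2}\left| \xi \right|^2, 0 \right)$ the second coordinate vanishes, so $\alpha$ drops out and $g$ returns $\frac{\pi}{2}\left| \xi \right|^2$, recovering the type $(b)$ set. Since the joint eigenspaces span $L^2\left( M \right)$, these values exhaust the spectrum.

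The computation is routine; the only point requiring care is to justify that the spectrum is genuinely the image of the joint spectrum and nothing larger. Because $M$ is compact, $\mathcal{L}_\alpha$ has discrete spectrum, with each eigenspace obtained by grouping those joint eigenspaces on which $g$ takes a common value, so $\sigma\left( \mathcal{L}_\alpha \right)$ is literally the set $\left\{ \lambda - \alpha \mu \right\}$ computed above, with no continuous or residual part. I expect the main (and minor) obstacle to be bookkeeping: confirming that the identification $i^{-1}T \mapsto \mu$, $T \mapsto i\mu$ is applied with the correct sign so that the $\alpha \operatorname{sgn} n$ term carries the intended orientation, and observing that distinct joint-spectrum points may collapse to a single eigenvalue of $\mathcal{L}_\alpha$ — a phenomenon that affects multiplicities but not the underlying set described in the statement.
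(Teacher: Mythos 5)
Your proposal is correct and is precisely the argument the paper intends: the corollary is stated as an immediate consequence of the joint spectral decomposition of the strongly commuting self-adjoint pair $\left( \mathcal{L}_0, i^{-1}T \right)$, and your computation $\mathcal{L}_\alpha v = \left( \lambda - \alpha\mu \right) v$ on joint eigenvectors, followed by substitution of the type $(a)$ and type $(b)$ points, matches the claimed sets exactly (including the sign of the $\alpha \operatorname{sgn} n$ term). Your added care about the spectral mapping step and the possible collapse of distinct joint-spectrum points onto a single eigenvalue is sound and only makes explicit what the paper leaves implicit.
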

With this result, the family of Green operators $\left\{
  \mathcal{G}_\alpha 
\right\}$ corresponding to $\mathcal{L}_\alpha$ can be defined
explicitly.  Letting $\left\{ e_\ell
\right\}$ be an orthonormal basis for $\left( \ker
  \mathcal{L}_\alpha \right)^\perp$ induced from spectral decomposition of $\mathcal{L}_\alpha$, we have that each
$\mathcal{G}_\alpha: L^2 \left( M\right)\to L^2 \left( M\right)$
is densely defined by
\[\mathcal{G}_\alpha f = 0 \text{ if } f \in \ker \square_b\]
and
\[\mathcal{G}_\alpha f = \sum_\ell \frac{ \left\langle
      f,e_\ell
    \right\rangle}{\lambda_\ell^\alpha} e_\ell \text{ if } f\in
  \left( \ker \square_b\right) ^{\perp},\]
where $\lambda_\ell^\alpha$ is the eigenvalue corresponding to
$e_\ell$. We drop the $\alpha$ from the notation with the understanding that the eigenvalues that appear are dependent on $\alpha$.

To define a classical Sobolev space on a compact Heisenberg manifold $M$ we make use of a Laplace-Beltrami operator with an explicit spectrum that works well with the spectrum of $\mathcal{L}_\alpha$. One can also consider ``nonisotropic" Sobolev spaces, which we do not consider here. For such a study on the Heisenberg group, we refer to \cite{Stein} and \cite{CanarecciMasterthesis}.

\begin{theorem*}[\cite{Taylor1986}]
  There is a family of Riemannian metrics on $M$ parametrized by
  $\varepsilon > 0$ with associated (positive) Laplacian,
  \[L_\varepsilon = \mathcal{L}_0 - \varepsilon T^2.\]
\end{theorem*}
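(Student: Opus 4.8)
The plan is to realize $L_\varepsilon$ as the honest Laplace--Beltrami operator of an explicit left-invariant Riemannian metric, the point being that $\mathbb{H}_d$ is a unimodular Lie group so no first-order terms appear. Write the standard left-invariant frame on $\mathbb{H}_d$ as $\left\{ X_1,\dots,X_d,Y_1,\dots,Y_d,T\right\}$, where $T$ spans the center and $\mathcal{L}_0 = -\sum_{j}\left( X_j^2 + Y_j^2\right)$ is the sub-Laplacian. First I would fix, for each $\varepsilon > 0$, a left-invariant inner product $g_\varepsilon$ by specifying it on the Lie algebra: declare $\left\{ X_j,Y_j\right\}$ to be orthonormal, set $\langle T,T\rangle_{g_\varepsilon} = \varepsilon^{-1}$, and take $T$ orthogonal to every horizontal field. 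Transporting this inner product by left translation produces a left-invariant Riemannian metric on $\mathbb{H}_d$.

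The second step is descent to the quotient. Since $\Gamma$ acts on $\mathbb{H}_d$ by left translations and $g_\varepsilon$ is left-invariant, the metric is automatically $\Gamma$-invariant and therefore descends to a well-defined Riemannian metric on $M = \Gamma\setminus \mathbb{H}_d$, giving the asserted family parametrized by $\varepsilon$. Because the covering $\mathbb{H}_d \to M$ is then a local isometry, the Laplace--Beltrami operator of $M$ coincides locally with that of $\mathbb{H}_d$, so it is enough to compute the latter in the chosen frame.

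To compute the operator I would use that, relative to a left-invariant orthonormal frame $\left\{ E_i\right\}$, the positive Laplacian is $\Delta = -\sum_i E_i^2 + \sum_i \nabla_{E_i}E_i$. The Koszul formula gives $\langle \nabla_{E_i}E_i, E_k\rangle = -\langle [E_i,E_k], E_i\rangle$, and summing over $i$ yields $\langle \sum_i \nabla_{E_i}E_i,\, E_k\rangle = \operatorname{tr}\left( \operatorname{ad}_{E_k}\right)$. As $\mathbb{H}_d$ is nilpotent, hence unimodular, each $\operatorname{ad}_{E_k}$ is traceless, so the first-order term vanishes and $\Delta = -\sum_i E_i^2$. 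Applying this to the $g_\varepsilon$-orthonormal frame $\left\{ X_j, Y_j, \sqrt{\varepsilon}\,T\right\}$ gives $\Delta = -\sum_j\left( X_j^2 + Y_j^2\right) - \varepsilon T^2 = \mathcal{L}_0 - \varepsilon T^2 = L_\varepsilon$, with positivity following from $-T^2 = T^*T \geq 0$ and the analogous identities for $X_j,Y_j$.

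The main obstacle is precisely the vanishing of $\sum_i \nabla_{E_i}E_i$: this is where unimodularity of $\mathbb{H}_d$ is indispensable, and the Koszul computation must be carried out with care to identify $\operatorname{tr}\left( \operatorname{ad}_{E_k}\right)$ as the only obstruction. Once unimodularity is invoked, the remainder is bookkeeping of the normalization $\langle T,T\rangle_{g_\varepsilon} = \varepsilon^{-1}$, which pins down the coefficient $\varepsilon$ in front of $T^2$ and fixes the sign conventions rendering $L_\varepsilon$ positive.
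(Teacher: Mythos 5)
Your proof is correct, and in fact the paper offers no proof of this statement at all --- it is quoted directly from \cite{Taylor1986} --- so your argument is a faithful reconstruction of the standard construction behind that citation: take the left-invariant metric making $\left\{ X_j, Y_j, \sqrt{\varepsilon}\, T \right\}$ orthonormal, descend it to $M = \Gamma \setminus \mathbb{H}_d$ via left invariance, and use unimodularity (indeed nilpotency) of $\mathbb{H}_d$ to kill the first-order term $\sum_i \nabla_{E_i} E_i$ in the Laplace--Beltrami operator, leaving $\Delta = -\sum_j \left( X_j^2 + Y_j^2 \right) - \varepsilon T^2 = L_\varepsilon$. The Koszul computation identifying the obstruction as $\operatorname{tr}\left( \operatorname{ad}_{E_k} \right)$, the normalization $\langle T, T \rangle_{g_\varepsilon} = \varepsilon^{-1}$, and the positivity claim (left-invariant fields are skew-adjoint on $L^2(M)$ since the Riemannian volume is a Haar measure) are all handled correctly, so nothing further is needed.
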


\begin{corollary*}
  For $\varepsilon>0$, the spectrum of $L_\varepsilon$ on $M$ is
  \[\left\{ \frac{ \pi \left| n \right|}{2c} \left( d + 2j
    \right) + \varepsilon\frac{ \pi^2}{4c^2}n^2: j\in
    \mathbb{Z}_{\geq 0}, n 
    \in \mathbb{Z}\setminus \left\{ 0 \right\}\right\}  \cup
  \left\{ \frac{ \pi}{2} \left| \xi \right|^2: \xi\in \Lambda'
  \right\}.\]
\end{corollary*}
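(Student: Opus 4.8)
The plan is to exhibit $L_\varepsilon$ as a polynomial in the two strongly commuting self-adjoint operators $\mathcal{L}_0$ and $i^{-1}T$, and then read off its spectrum directly from Folland's joint spectral decomposition. The key preliminary observation is purely algebraic: since $T = i\bigl(i^{-1}T\bigr)$, we have $T^2 = i^2\bigl(i^{-1}T\bigr)^2 = -\bigl(i^{-1}T\bigr)^2$. Substituting this into Taylor's formula gives
\[
  L_\varepsilon = \mathcal{L}_0 - \varepsilon T^2 = \mathcal{L}_0 + \varepsilon\bigl(i^{-1}T\bigr)^2,
\]
so that $L_\varepsilon = p\bigl(\mathcal{L}_0, i^{-1}T\bigr)$ for the polynomial $p(x,y) = x + \varepsilon y^2$.

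Next I would invoke the joint spectral decomposition. Because $\mathcal{L}_0$ and $i^{-1}T$ are self-adjoint and strongly commute, and $M$ is compact so the spectrum is pure point, there is a complete orthonormal basis of $L^2(M)$ consisting of simultaneous eigenvectors whose eigenvalue pairs $(\lambda,\mu)$ range over the joint spectrum listed in Folland's theorem. On each such eigenvector the functional calculus shows that $L_\varepsilon = p\bigl(\mathcal{L}_0, i^{-1}T\bigr)$ acts as multiplication by $\lambda + \varepsilon\mu^2$. It then remains only to substitute the two families. For a type $(a)$ pair $\bigl(\tfrac{\pi|n|}{2c}(d+2j),\, \tfrac{\pi n}{2c}\bigr)$, squaring the second coordinate yields
\[
  \frac{\pi|n|}{2c}(d+2j) + \varepsilon\,\frac{\pi^2 n^2}{4c^2},
\]
while for a type $(b)$ pair $\bigl(\tfrac{\pi}{2}|\xi|^2,\, 0\bigr)$ the second coordinate vanishes and the eigenvalue is simply $\tfrac{\pi}{2}|\xi|^2$. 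Collecting these reproduces precisely the two sets in the statement, and completeness of the joint eigenbasis guarantees that no further spectrum arises.

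The computation is routine, so the only point demanding care is the passage through the functional calculus for the unbounded operators. One must verify that $p\bigl(\mathcal{L}_0, i^{-1}T\bigr)$ really coincides with Taylor's operator $\mathcal{L}_0 - \varepsilon T^2$ as defined on its natural domain, i.e. that the domains agree and that the common eigenvectors lie in the domain of $T^2$. Since these eigenvectors are smooth and the spectrum is discrete, this is immediate, but it is exactly the step where self-adjointness and strong commutativity of $\mathcal{L}_0$ and $i^{-1}T$ are genuinely used; everything else reduces to the elementary identity $T^2 = -\bigl(i^{-1}T\bigr)^2$ and arithmetic.
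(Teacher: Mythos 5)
Your proposal is correct and follows exactly the route the paper intends: the paper states this corollary without proof, deriving it (just as with the earlier corollary for $\mathcal{L}_\alpha$) from the identity $L_\varepsilon = \mathcal{L}_0 + \varepsilon\bigl(i^{-1}T\bigr)^2$ together with the joint spectral decomposition of the strongly commuting self-adjoint operators $\mathcal{L}_0$ and $i^{-1}T$. Your substitution of the type $(a)$ and type $(b)$ joint eigenvalue pairs, and your remark on the functional-calculus step, supply precisely the details the paper leaves implicit.
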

We define $L = L_1$ and for the remainder of this paper consider $\alpha$ such that $-d \leq \alpha \leq d$.

\section{Schatten estimates for $\mathcal{G}_\alpha$}
In this section we characterize when the Green operator $\mathcal{G}_\alpha$ corresponding to $\mathcal{L}_\alpha$ has finite Schatten norm. In particular, this implies $\mathcal{G}_\alpha$ is compact for all $-d \leq \alpha \leq d$.
\begin{definition}
  Let $T$ be a compact and positive semi-definite operator from a
  separable Hilbert space to itself. For all $r\geq 1$, the {\it
    Schatten $r$-norm} of $T$ is defined by,
  \[\left\Vert T \right\Vert_r = \left( \sum_{k=0}^\infty
      \lambda_k \left( T \right)^r\right) ^{1/r}\]
  where $\lambda_k \left( T\right)$ are eigenvalues of $T$ ordered
  in decreasing fashion. 
\end{definition}

Note that we can define the Schatten norm for operators that are not necessarily compact, as long as the point spectrum is non-negative and countable. In particular, if there exists $r$ such that the Schatten $r$-norm is finite, then the operator must be compact. 

\begin{theorem}\label{thm:schatten}
  $\left\Vert \mathcal{G}_\alpha \right\Vert_r < \infty$ if and
  only if $r > d + 1$. 
\end{theorem}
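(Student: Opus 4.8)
The plan is to turn the finiteness of $\left\Vert \mathcal{G}_\alpha \right\Vert_r$ into the convergence of an explicit Dirichlet-type series built from Folland's spectrum. Since $\mathcal{G}_\alpha$ inverts $\mathcal{L}_\alpha$ on $(\ker\square_b)^\perp$ and is $0$ on the kernel, its nonzero eigenvalues are precisely the reciprocals $\lambda^{-1}$ of the nonzero eigenvalues $\lambda$ of $\mathcal{L}_\alpha$, with the same multiplicities; in particular they are positive, so $\mathcal{G}_\alpha$ is positive semi-definite and the Schatten norm is legitimate. Thus
\[\left\Vert \mathcal{G}_\alpha \right\Vert_r^{\,r} = \sum_{\lambda \neq 0} \frac{m(\lambda)}{\lambda^{r}},\]
where $m(\lambda)$ is the multiplicity. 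By the Corollary describing the spectrum of $\mathcal{L}_\alpha$, this splits into a type $(a)$ sum and a type $(b)$ sum, which I would analyze separately, aiming to show that the type $(a)$ sum dictates the threshold.

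For the type $(b)$ eigenvalues $\tfrac{\pi}{2}\left| \xi \right|^2$, $\xi \in \Lambda'\setminus\{0\}$, I would recognize the contribution $(\pi/2)^{-r}\sum_{\xi\neq 0}\left| \xi \right|^{-2r}$ as an Epstein-type lattice sum, viewing $\Lambda'$ as a full-rank lattice in $\mathbb{C}^d \cong \mathbb{R}^{2d}$. Comparison with $\int_{\left| x \right|\ge 1}\left| x \right|^{-2r}\,dx$ shows it converges if and only if $2r > 2d$, i.e. $r > d$. Since $d < d+1$, the type $(b)$ part is finite whenever $r > d+1$ and so never obstructs the claimed threshold.

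The heart of the argument is the type $(a)$ sum. Writing $C = \tfrac{\pi}{2c}$ and inserting the multiplicity $\left| n \right|^{d}L\binom{j+d-1}{d-1}$, it equals
\[L\,C^{-r}\sum_{n\neq 0} \left| n \right|^{\,d-r}\sum_{j\ge 0}\binom{j+d-1}{d-1}\bigl(d+2j-\alpha\operatorname{sgn}n\bigr)^{-r}.\]
The decisive observation is that the inner $j$-sum depends on $n$ only through $\operatorname{sgn}n$, hence takes at most two values $B_{+},B_{-}$. Because $\binom{j+d-1}{d-1}$ is comparable to $(1+j)^{d-1}$ and $d+2j-\alpha\operatorname{sgn}n$ is comparable to $1+j$ while staying bounded away from $0$ on the nonzero eigenvalues (here $-d\le\alpha\le d$ is used), each inner sum behaves like $\sum_{j}(1+j)^{d-1-r}$: it is finite precisely when $r>d$, and when finite it is bounded above and below by positive constants independent of $n$. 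Consequently the type $(a)$ sum is comparable to $\sum_{n\neq 0}\left| n \right|^{d-r}$, which converges if and only if $r>d+1$. When $r\le d$ the inner sum already diverges for every $n$, and when $d<r\le d+1$ the inner sums converge but the outer sum $\sum_{n}\left| n \right|^{d-r}$ diverges; either way $\left\Vert \mathcal{G}_\alpha \right\Vert_r=\infty$.

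Combining the two cases, the binding constraint is $r>d+1$, giving the theorem. For $-d<\alpha<d$ no type $(a)$ eigenvalue vanishes; in the boundary cases $\alpha=\pm d$ the terms with $j=0$ and one sign of $n$ drop out of the kernel-free sum, but this affects neither direction of the equivalence, since deleting terms can only help convergence and the divergence for $r\le d+1$ already comes from the surviving bulk ($j\ge 1$, all $n$). I expect the main obstacle to be the two-sided estimate of the type $(a)$ double sum, especially the divergence in the intermediate range $d<r\le d+1$, where one must retain a uniform positive lower bound on the inner $j$-sum so that the divergence of $\sum_n\left| n \right|^{d-r}$ is genuinely transmitted to the full series; the convergence direction and the type $(b)$ Epstein estimate are then routine integral comparisons.
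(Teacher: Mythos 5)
Your proposal is correct and takes essentially the same route as the paper: the identical decomposition into type $(a)$ and type $(b)$ contributions, the same thresholds ($r>d$ for the inner $j$-sum and the lattice sum, $r>d+1$ for the outer $n$-sum), and the same treatment of $\alpha=\pm d$ by discarding the vanishing $j=0$ terms for one sign of $n$. The extra care you flag about a uniform lower bound on the inner sum is automatic, since that sum depends on $n$ only through $\operatorname{sgn} n$ --- exactly the fact the paper's proof exploits implicitly.
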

\begin{proof}
  First assume $- d < \alpha < d$.
  We see that,
    \[\left\Vert \mathcal{G}_\alpha \right\Vert_r^r = \sum
      _{\substack{n\in \mathbb{Z}\setminus \left\{ 0 \right\} \\ j
          \in \mathbb{Z}_\geq 0}} \left| n \right|^d L \binom{j +d
        - 1}{d - 1} \left( \frac{ 2c}{\pi \left| n \right|\left( d
            +2j - \alpha \operatorname{sgn} \left(
              n\right)\right)}\right)^r + \sum_{\xi
        \in\Lambda'\setminus \left\{ 0
        \right\}}\frac{2^r}{\pi^r}\frac{ 1}{\left| \xi 
        \right|^{2r}}.\]
  We look at the first sum where $n < 0$. The case where $n > 0$
  follows similarly.
  We see that,
  \begin{align*}
    \sum_{n=1}^\infty \sum_{j=0}^\infty n^d \binom{j + d - 1}{d -
    1} \frac{ 1}{n^r \left( d + 2j + \alpha \right)^r}
    &= \sum_{n=1}^\infty \frac{ 1}{n^{r - d}} \sum_{j=0}^\infty
      \frac{ \left( j + d - 1 \right)!}{\left( d - 1 \right)! j!}
      \frac{ 1}{\left( d  + 2j + \alpha \right)^r}\\
    &= \frac{ 1}{\left( d - 1 \right)!} \sum_{n=1}^\infty \frac{
      1}{n^{r - d}}
      \sum_{j=0}^\infty \frac{ \left( j + d - 1 \right) \cdots
      \left( j + 1 \right)}{\left( d + 2j + \alpha \right)^r}.
  \end{align*}
  Note that the sum indexed by $j$ converges if and only if $r >
  d $. Similarly, the sum indexed by $n$ converges if and only
  if $r > d + 1$. Thus, the sum indexed by $n$ and $j$ converges
  if and only if $r > d + 1$. If we can show that the sum indexed
  by the lattice converges if and only if $r > d$ then our
  claim follows. This follows by noting that
  \[\sum_{\zeta\in \mathbb{Z}^{2d}\setminus \left\{ 0 \right\}}
    \frac{ 1}{\left| \zeta\right|^{2r}}\]
  converges if and only if $r > d$.

  The case where $\alpha = \pm d$ follows identically, as we only
  need to omit from the summation the case where $j = 0$ and
  either $n>0$ or $n < 0$. 
\end{proof}

\section{Sobolev estimates for $\mathcal{G}_\alpha$}
Let $\lambda_\ell$ be a non-zero eigenvalue of
$\mathcal{L}_\alpha$ and $\mu_\ell$ be the corresponding non-zero eigenvalue of $L$ that lives in the same eigenspace as
$\lambda_\ell$. For example,
\[\lambda_\ell = \frac{\pi \left|n\right|}{2c} \left(d + 2j - \alpha \operatorname{sgn} n \right)\text{ corresponds to } \mu_\ell = \frac{\pi \left|n\right|}{2c} \left(d + 2j\right) + \frac{\pi^2}{4c^2} n^2.\]
Note that this correspondence makes sense due to the simultaneous diagonalizability of $\mathcal{L}_0$ and $i^{-1} T$.
\begin{definition}
  Fix $s\in \mathbb{R}$. The {\it Sobolev space} $H^s \left(
    M\right)$ is defined to be
  \[H^s\left(M\right)=\left\{ f\in L^2 \left( M \right): \left( I + L \right)^{s/2}
      f \in L^2 \left( M  \right)\right\}.\]
  Moreover, we equip $H^s \left( M\right)$ with the norm
  \[\left\Vert f \right\Vert_s = \left\Vert \left( I + L
      \right)^{s/2} f \right\Vert_{L^2}.\]
\end{definition}

To show there exists $C_{\alpha,s}>0$ so that for any $t\in \mathbb{R}$, $\left\Vert \mathcal{G}_\alpha f \right\Vert_{t+s} \leq C \left\Vert f \right\Vert_t$, we only need to verify the values of $s$ for which the sequence $\left\{ \frac{ \left( 1 + \mu_\ell \right)^{s/2}}{\lambda_\ell}
\right\}$ bounded.

\begin{proposition}
  The sequence $\left\{ \frac{ \left( 1 + \mu_\ell
      \right)^{s/2}}{\lambda_\ell} \right\}$ is bounded if and
  only if $s\leq 1$. 
\end{proposition}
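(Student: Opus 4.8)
The plan is to treat the two families of eigenvalues from the corollaries separately and to reduce the whole question to the single exponent $s=1$. Write $A = \pi/(2c)$. For a type $(a)$ eigenvalue indexed by $(n,j)$ put $m = |n| \ge 1$ and $\kappa = d + 2j - \alpha\operatorname{sgn} n$, so that $\lambda_\ell = Am\kappa$ while $\mu_\ell = Am(d+2j) + A^2 m^2$; for a type $(b)$ eigenvalue $\lambda_\ell = \mu_\ell = \tfrac{\pi}{2}|\xi|^2$. Since $1 + \mu_\ell \ge 1$, the quantity $(1+\mu_\ell)^{s/2}$ is nondecreasing in $s$. Hence boundedness at $s=1$ automatically gives boundedness for every $s\le 1$, and to disprove boundedness for $s>1$ it suffices to produce one divergent subsequence. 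This splits the proposition into (i) boundedness at $s=1$ and (ii) a divergent subsequence for each $s>1$.

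For (i) I would show $1 + \mu_\ell \le C\lambda_\ell^2$ uniformly, which is equivalent to boundedness at $s=1$. The pivotal observation is that $\kappa = d + 2j - \alpha\operatorname{sgn} n$ is bounded below by a positive constant $\kappa_0$ over all nonzero eigenvalues: for $-d<\alpha<d$ one has $\kappa \ge d - |\alpha| > 0$, while for $\alpha = \pm d$ the only term with $\kappa = 0$ is exactly the $j=0$ eigenvalue lying in $\ker\square_b$ and omitted from $\mathcal{G}_\alpha$, leaving $\kappa \ge 2$. Granting $\kappa \ge \kappa_0$ and $m \ge 1$, I would bound each summand of $1 + \mu_\ell = 1 + Am(d+2j) + A^2 m^2$ by a constant multiple of $\lambda_\ell^2 = A^2 m^2\kappa^2$: the constant and quadratic terms are immediate from $m\ge1$ and $\kappa\ge\kappa_0$, and for the cross term I would use $d+2j = \kappa + \alpha\operatorname{sgn} n \le \kappa + d \le (1 + d/\kappa_0)\kappa$ together with $m \le m^2$. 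Type $(b)$ is then disposed of by the inequality $1 + \tfrac{\pi}{2}|\xi|^2 \le C\bigl(\tfrac{\pi}{2}|\xi|^2\bigr)^2 = C\lambda_\ell^2$, valid because $|\xi|^2$ is bounded below over nonzero vectors of the lattice $\Lambda'$.

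For (ii) I would fix $j=0$ and let $m = |n| \to \infty$, choosing $\operatorname{sgn} n$ so that $\kappa = d - \alpha\operatorname{sgn} n > 0$: take $n<0$ when $\alpha = d$, take $n>0$ when $\alpha = -d$, and either sign otherwise. Along this subsequence $\lambda_\ell = Am\kappa$ is of exact order $m$, whereas $\mu_\ell = Amd + A^2 m^2$ is of order $m^2$, so $(1+\mu_\ell)^{s/2}/\lambda_\ell$ is of order $m^{\,s-1}$, which diverges precisely when $s>1$.

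The genuine content, and the only place the threshold $1$ (rather than $2$) enters, is the mismatch exposed in (ii): the Laplace--Beltrami operator $L = L_1$ contributes the term $A^2 m^2 = \tfrac{\pi^2}{4c^2}n^2$ coming from $-T^2$, growing quadratically in $|n|$, whereas $\mathcal{L}_\alpha$ grows only linearly in $|n|$ at fixed $j$. Every competing regime---large $j$ at fixed $n$, or the type $(b)$ lattice sum---is comfortably compatible with $s$ up to $2$, so it is this single quadratic-versus-linear discrepancy that pins the answer at $s=1$. The one delicate bookkeeping step I anticipate is establishing the uniform lower bound $\kappa \ge \kappa_0 > 0$ in the boundary cases $\alpha = \pm d$, where the offending vanishing term is precisely the kernel eigenvalue removed in the definition of $\mathcal{G}_\alpha$.
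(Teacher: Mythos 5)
Your proof is correct, and its skeleton coincides with the paper's: the type (b) eigenvalues are dispatched because there $\mu_\ell = \lambda_\ell$ (so they only impose $s \le 2$), unboundedness for $s>1$ comes from the subsequence $j=0$, $|n|\to\infty$ with the sign of $n$ chosen so that $\kappa = d+2j-\alpha\operatorname{sgn} n$ stays a fixed positive constant (pitting the quadratic $n^2$ term of $\mu_\ell$ against the linear growth of $\lambda_\ell$), and boundedness comes from a term-by-term comparison of $1+\mu_\ell$ against $\lambda_\ell$. Where you differ is in execution: the paper reduces to $0<s\le 1$ and applies concavity of $t\mapsto t^{s/2}$ (and convexity for $s>1$) to the three-term sum, whereas you use monotonicity of $(1+\mu_\ell)^{s/2}$ in $s$ to collapse the whole boundedness half to the single exponent-free inequality $1+\mu_\ell\le C\lambda_\ell^2$, verified termwise from $m\ge 1$ and $\kappa\ge\kappa_0$. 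This buys two things. First, it removes all fractional-power manipulations, making the argument slightly more elementary. Second, and more substantively, your explicit uniform lower bound $\kappa\ge\kappa_0$ (with $\kappa_0 = d-|\alpha|$ for $-d<\alpha<d$ and $\kappa_0=2$ for $\alpha=\pm d$ after deleting the kernel eigenvalue) cleanly covers the boundary cases: the paper's displayed boundedness estimate has $d+2j-|\alpha|$ in its denominator, which vanishes at $\alpha=\pm d$, $j=0$, and the paper's proof of this proposition never comments on the exclusion (it addresses the analogous point only in the Schatten theorem), so your bookkeeping patches a small gap in the paper's write-up while following the same overall strategy.
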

\begin{proof}
  If we restrict to type (b) eigenvalues we see that $\left\{
    \frac{ \left( 1 + \mu_\ell \right)^{s/2}}{\lambda_\ell}
  \right\}$ is bounded if and only if $s\leq 2$ as in this case,
  $\mu_\ell = \lambda_\ell$. If we can show
  that for type (a) eigenvalues the aforementioned sequence is
  bounded if and only if $s\leq 1$, then we are done.

  Assume $s > 1$. Consider the subsequence corresponding to $j =
  0$ and $n < 0$ if $\alpha \geq 0$, and $n > 0$ if $\alpha <
  0$. By convexity,
  \[\frac{ \left( 1 + \frac{ \pi d}{2c} \left| n \right| + \frac{
          \pi^2}{4c^2} n^2 \right){^s}}{\frac{ \pi^2}{4c^2}n^2
      \left( d + \left| \alpha \right| \right)^2} \geq \frac{ 1 +
      \left(\frac{\pi d}{2c}\right)^s \left| n \right|^s + \left(\frac{\pi^2}{4c^2}\right)^s n^{2s}}{\frac{\pi^2}{4c^2} \left(d+ \left|\alpha\right|\right)^2 n^2}.\]
  Since $s > 1$, the sequence is unbounded.

  Now assume $s \leq 1$. We can further assume $0 < s \leq 1$. By
  concavity,
  \[\frac{ \left( 1 + \frac{ \pi }{2c} \left( d + 2j \right)
        \left| n \right| + \frac{ \pi^2}{4c^2} n^2
      \right){^{s/2}}}{\frac{ \pi \left| n \right|}{2c}\left( d +
        2j - \alpha \operatorname{sgn} \left( n \right)\right)}
    \leq \frac{ 1 + \left(  \frac{ \pi }{2c}
        \left( d + 2j \right) \left| n
        \right|\right){^{s/2}} + \left| n \right|^s}{\frac{ \pi
        \left| n \right|}{2c}\left( d + 2j - \left| \alpha
        \right|\right)}.\]
  The right hand sequence is bounded.
\end{proof}

Sobolev norm estimates for $\mathcal{G}_\alpha$ follow
immediately. 
\begin{theorem}\label{thm:sobolev}
  There exists $C_\alpha$ independent of $f$ such that,
  \[ \left\Vert \mathcal{G}_\alpha f \right\Vert_{s+1} \leq C_\alpha
    \left\Vert f \right\Vert_s.\] 
\end{theorem}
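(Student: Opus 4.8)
The plan is to reduce the Sobolev estimate to the spectral bound already established in the Proposition, exploiting the simultaneous diagonalizability that also underlies the definition of $\mu_\ell$. Since both $L = \mathcal{L}_0 - T^2$ and $\mathcal{L}_\alpha = \mathcal{L}_0 + i\alpha T$ are functions of the strongly commuting self-adjoint pair $\mathcal{L}_0$ and $i^{-1}T$, I would fix a single orthonormal eigenbasis $\left\{ e_\ell \right\}$ of $L^2\left( M\right)$ diagonalizing both operators at once. In each joint eigenspace $e_\ell$ then carries the eigenvalue $\lambda_\ell$ for $\mathcal{L}_\alpha$ and the matching nonzero eigenvalue $\mu_\ell$ for $L$, which is exactly the correspondence recorded just before the Proposition. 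On this basis $\left( I + L \right)^{(s+1)/2}$ acts on $e_\ell$ by the scalar $\left( 1 + \mu_\ell \right)^{(s+1)/2}$, and $\mathcal{G}_\alpha$ acts by $\lambda_\ell^{-1}$ off the kernel.

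Next I would expand an arbitrary $f \in H^s\left( M\right)$ as $f = \sum_\ell \left\langle f, e_\ell \right\rangle e_\ell$. By definition $\mathcal{G}_\alpha$ vanishes on $\ker \square_b$, so those terms contribute nothing to $\mathcal{G}_\alpha f$ and the inequality holds trivially there; on the orthogonal complement $\mathcal{G}_\alpha$ multiplies $e_\ell$ by $\lambda_\ell^{-1}$. Writing out both Sobolev norms diagonally gives
\[\left\Vert \mathcal{G}_\alpha f \right\Vert_{s+1}^2 = \sum_\ell \left( 1 + \mu_\ell \right)^{s+1} \frac{ \left| \left\langle f, e_\ell \right\rangle \right|^2}{\lambda_\ell^2}, \qquad \left\Vert f \right\Vert_s^2 = \sum_\ell \left( 1 + \mu_\ell \right)^s \left| \left\langle f, e_\ell \right\rangle \right|^2,\]
where the sum defining $\mathcal{G}_\alpha f$ ranges only over indices with $\lambda_\ell \neq 0$, for which $\mu_\ell \neq 0$ as well (the only eigenvalue with $\mu_\ell = 0$ is the type (b) value $\xi = 0$, which also has $\lambda_\ell = 0$). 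The estimate then reduces to a term-by-term comparison: it suffices to show $\left( 1 + \mu_\ell \right)^{s+1}\lambda_\ell^{-2} \leq C_\alpha^2 \left( 1 + \mu_\ell \right)^s$ for every $\ell$, equivalently that $\left( 1 + \mu_\ell \right)^{1/2}/\lambda_\ell \leq C_\alpha$ uniformly in $\ell$.

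This last bound is precisely the Proposition evaluated at $s = 1$, which guarantees that $\left\{ \left( 1 + \mu_\ell \right)^{1/2}/\lambda_\ell \right\}$ is bounded; taking $C_\alpha$ to be its supremum and summing the termwise inequality finishes the argument. I expect the only genuine subtlety to be bookkeeping rather than analysis: one must verify that the index $\ell$ ranges over the \emph{same} eigenbasis in both norm expansions (this is exactly what simultaneous diagonalizability buys us) and that $\ker \square_b$, where $\mathcal{G}_\alpha$ vanishes, is separated out cleanly so that dividing by $\lambda_\ell$ is never ill-defined. Once these points are in place the analytic content has already been supplied by the Proposition, and the theorem follows immediately.
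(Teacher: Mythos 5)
Your proposal is correct and follows exactly the paper's route: the paper reduces the estimate to the boundedness of $\left\{ \left( 1 + \mu_\ell \right)^{s/2}/\lambda_\ell \right\}$ (stated just before the Proposition) and then declares the theorem to ``follow immediately'' from the Proposition at $s = 1$. You have simply written out the diagonal computation in the joint eigenbasis that the paper leaves implicit, including the correct handling of $\ker \square_b$ and the termwise comparison.
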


In particular, the Kohn Laplacian on compact Heisenberg manifolds on functions is subelliptic, and therefore, hypoelliptic. 

One advantage of the spectral approach is obtaining sharp explicit constants $C_\alpha$ satisfying the above
inequality. 
\begin{proposition}
   For $-d < \alpha < d$, we can take
  \[C_\alpha = \sup_\ell \frac{ \left( 1 + \mu_\ell
      \right)^{1/2}}{\lambda_\ell} = \max \left\{ \frac{ \left(1 +
        \frac{ \pi}{2c}d + \frac{ \pi^2}{4c^2}\right)^{1/2}}{\frac{
          \pi}{2c}\left( d - \left| \alpha \right| \right)},
    \frac{ \left(1 + \frac{ \pi}{2}\left| \xi_0
        \right|^2\right)^{1/2}}{\frac{ 
        \pi}{2}\left| \xi_0 \right|^2}\right\},\]
  where $\xi_0$ is a non-zero vector in the lattice with minimal
  length. For $\alpha = \pm d$, we can take
  \[C_\alpha = \sup_\ell \frac{ \left( 1 + \mu_\ell
      \right)^{1/2}}{\lambda_\ell} = \max \left\{ \frac{ \left( 1
          + \frac{ \pi}{2c}\left( d + 2 \right) + \frac{
            \pi^2}{4c^2} \right)^{1/2}}{\frac{ \pi}{c}},\frac{
        \left(1 + \frac{ 
          \pi}{2}\left| \xi_0 \right|^2\right)^{1/2}}{\frac{ 
        \pi}{2}\left| \xi_0 \right|^2}\right\}.\]
\end{proposition}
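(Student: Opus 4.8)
The plan is to identify $C_\alpha$ as the sharp constant forced by the diagonal form of $\mathcal{G}_\alpha$, and then to evaluate the resulting supremum explicitly by reducing it to the maximization of elementary functions. Since $\mathcal{G}_\alpha$ and $L$ are simultaneously diagonalized by $\{e_\ell\}$, expanding in this basis gives $\left\Vert \mathcal{G}_\alpha f\right\Vert_{s+1}^2 = \sum_\ell (1+\mu_\ell)\lambda_\ell^{-2}(1+\mu_\ell)^s \left|\left\langle f,e_\ell\right\rangle\right|^2 \leq \big(\sup_\ell (1+\mu_\ell)\lambda_\ell^{-2}\big)\left\Vert f\right\Vert_s^2$, with equality approached along $f = e_{\ell_0}$; hence the sharp constant is exactly $\sup_\ell (1+\mu_\ell)^{1/2}/\lambda_\ell$, and it remains only to compute this supremum. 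I would split it as the maximum of the supremum over type (a) eigenvalues and the supremum over type (b) eigenvalues.

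For type (b) one has $\mu_\ell = \lambda_\ell = \frac{\pi}{2}\left|\xi\right|^2$, so the quantity to maximize is $g(x) = (1+x)^{1/2}/x$ in the variable $x = \frac{\pi}{2}\left|\xi\right|^2 > 0$. A one-line computation gives $g'(x) = -(x+2)/\big(2x^2\sqrt{1+x}\big) < 0$, so $g$ is strictly decreasing on $(0,\infty)$ and the supremum is attained at the smallest admissible $x$, i.e.\ at the minimal-length nonzero lattice vector $\xi_0$. This produces the second entry of each maximum.

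For type (a) I would set $a = \frac{\pi}{2c}$, $m = \left|n\right| \geq 1$, and $k = d + 2j \in \{d, d+2, \dots\}$, so that $\mu_\ell = amk + a^2m^2$ depends only on $(m,k)$ while $\lambda_\ell = am\beta$ with $\beta = k - \alpha\operatorname{sgn} n$ independent of $m$. Then the ratio squared is $\frac{1}{a^2\beta^2}\big(\frac{1}{m^2} + \frac{ak}{m} + a^2\big)$, which decreases term by term in $m$, so the supremum occurs at $m = 1$. Differentiating $(1 + ak + a^2)/(a^2\beta^2)$ in $k$ gives a numerator proportional to $-ak - a(\alpha\operatorname{sgn} n) - 2 - 2a^2 \leq -a(k - \left|\alpha\right|) - 2 - 2a^2 < 0$ (using $\alpha\operatorname{sgn} n \geq -\left|\alpha\right|$ and $k \geq d \geq \left|\alpha\right|$), so within each fixed sign of $n$ the ratio decreases in $k$. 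Hence the only candidates for the type (a) supremum are the smallest-$j$ eigenvalues of each sign.

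This leaves a finite comparison of those candidates, which is where the single case distinction arises. For $-d < \alpha < d$, the value $j = 0$ is admissible for both signs; the aligned candidate $(m,k) = (1,d)$ and the opposite candidate $(1,d)$ share the numerator $(1 + ad + a^2)^{1/2}$, but the aligned one has the smaller denominator $a(d - \left|\alpha\right|) < a(d + \left|\alpha\right|)$, so it wins, giving $(1 + ad + a^2)^{1/2}/\big(a(d - \left|\alpha\right|)\big)$. For $\alpha = \pm d$ the aligned $j = 0$ eigenvalue equals $0$ and lies in $\ker\mathcal{L}_\alpha$, so it is excluded; the surviving candidates are the aligned $(1, d+2)$ term and the opposite $(1,d)$ term, and since the former has the strictly larger numerator and denominator $2a \leq 2ad$, the aligned $j = 1$ term dominates, giving $(1 + a(d+2) + a^2)^{1/2}/(2a)$. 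Substituting $a = \frac{\pi}{2c}$ and taking the larger of the type (a) and type (b) values yields the stated formulas. I expect the only delicate point to be this boundary bookkeeping at $\alpha = \pm d$: one must exclude the kernel eigenvalue and verify that the aligned $j = 1$ term, rather than the opposite-sign $j = 0$ term, realizes the maximum.
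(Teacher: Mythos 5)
Your proposal is correct and follows essentially the same route as the paper: both reduce to showing the ratio $(1+\mu_\ell)^{1/2}/\lambda_\ell$ is decreasing in $\left|n\right|$ and in $j$ (the paper via partial derivatives of a two-variable function with the worst-case sign absorbed into $\left|\alpha\right|$, you via a term-by-term estimate in $m$ and a derivative in $k$ with the sign kept explicit), and both conclude with the same finite comparison at $\alpha = \pm d$ after excluding the kernel eigenvalue. Your added remark that the constant is sharp, via testing on eigenvectors $e_{\ell_0}$, is a small but welcome supplement the paper leaves implicit.
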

\begin{proof}
  Fix $- d < \alpha < d$. We first show that $f:
  \mathbb{R}_{>0}\times \mathbb{R}_{\geq 
    0}\to \mathbb{R}$ defined by the rule,
  \[f \left( x,y\right) = \frac{ 1 + \frac{ \pi x}{2c} \left( d +
        2y \right) + \frac{ \pi^2}{4c^2}x^2}{\frac{ \pi^2
        x^2}{4c^2}\left( d + 2y - \left| \alpha \right|\right)^2}\]
  is decreasing in $x$, and decreasing in $y$. Fixing $y$, we see
  that 
  \[\frac{\partial }{\partial x} f \left( x,y \right) = - \frac{
      2c \left( \left( \pi d + 2 \pi y \right)x + 4c
      \right)}{\pi^2 \left( d + 2y  - \left| \alpha \right|
      \right)^2 x ^3}.\]
  Since $x > 0$, $f$ is decreasing in $x$. Now if we fix $x$, we
  see that
  \[\frac{\partial }{\partial y} f \left( x,y\right) = - \frac{ 4
      \left( 2\pi cxy + \pi^2 x^2 + \pi c\left(  d +  a 
        \right)x + 4c^2 \right)}{\pi^2 x^2 \left( d + 2y  - \left|
          \alpha \right| \right)^3}.\]
  Since $y \geq 0$, $f$ is decreasing in $y$. This yields the
  first case.

  For $\alpha = \pm d$, by a similar analysis to the above, it suffices to compare 
  \[\frac{ 1 + \frac{ \pi}{2c} d + \frac{ \pi^2}{4c^2}}{\frac{
        \pi^2}{4c^2} \left( d +  \left| \alpha \right| \right)^2}
    = \frac{ 1 + \frac{ \pi}{2c} d + \frac{ \pi^2}{4c^2}}{\frac{
        \pi^2}{c^2} d^2}
    \text{ and } \frac{ 1 + \frac{ \pi}{2c}\left( d + 2 \right) +
      \frac{ \pi^2}{4c^2}}{\frac{ \pi^2}{4c^2} \left( d + 2  -
        \left| \alpha \right|\right)^2} =  \frac{ 1 + \frac{
        \pi}{2c}\left( d + 2 \right) + 
      \frac{ \pi^2}{4c^2}}{\frac{ \pi^2}{c^2}}.\]
  Clearly the right hand side is larger than the left, giving the claim. 
\end{proof}

We note that one can also give Schatten and Sobolev norm estimates for the complex Green operator corresponding to the Kohn Laplacian on functions for lens spaces by a similar approach to \cite{kim2019sobolev}. In that setting, one must consider the existence of solutions to the diophantine system that appears in \cite{https://doi.org/10.48550/arxiv.2206.14250}.

\section*{Acknowledgements} 
First, we thank Yunus E. Zeytuncu for his support during this work.
We also thank Purvi Gupta and Jeffrey Im for their helpful
comments on an earlier version of this paper. This research was
completed at the REU Site: Mathematical Analysis and Applications
at the University of Michigan-Dearborn. We would like to thank the
National Science Foundation (DMS-1950102), the National Security
Agency (H98230-21), the College of Arts, Sciences, and Letters,
and the Department of Mathematics and Statistics for their
support.

\newcommand{\etalchar}[1]{$^{#1}$}


\begin{thebibliography}{KORZ20}

\bibitem[Can13]{CanarecciMasterthesis}
Giovanni Canarecci.
\newblock {Analysis of the Kohn Laplacian on the Heisenberg Group and on
  Cauchy-Riemann Manifolds}.
\newblock Master's thesis, Universit\`{a} di Bologna, 2013.

\bibitem[FKS{\etalchar{+}}22]{https://doi.org/10.48550/arxiv.2206.14250}
Colin Fan, Elena Kim, Ian Shors, Zoe Plzak, Samuel Sottile, and Yunus~E.
  Zeytuncu.
\newblock {Spectral Analysis of the Kohn Laplacian on Lens Spaces}, 2022.

\bibitem[FKZ22]{fan2021tauberian}
Colin Fan, Elena Kim, and Yunus~E. Zeytuncu.
\newblock {A Tauberian approach to an analog of Weyl’s law for the Kohn
  Laplacian on compact Heisenberg manifolds}.
\newblock {\em Complex Analysis and its Synergies}, 8, 03 2022.

\bibitem[Fol04]{Folland2004CompactHM}
G.~Folland.
\newblock {Compact Heisenberg manifolds as CR manifolds}.
\newblock {\em The Journal of Geometric Analysis}, 14:521--532, 2004.

\bibitem[Koh65]{Kohn65}
J.~J. Kohn.
\newblock {Boundaries of Complex Manifolds}.
\newblock In {\em Proc. Conf. on Complex Analysis (Minneapolis, 1964)}, pages
  81--94. Springer-Verlag Berlin, 1965.

\bibitem[KORZ20]{kim2019sobolev}
Elena Kim, W.~Jacob Ogden, Tommie Reerink, and Yunus~E. Zeytuncu.
\newblock {Sobolev and Schatten estimates for the complex Green operator on
  spheres}.
\newblock {\em New York Journal of Mathematics}, 26, 02 2020.

\bibitem[Sta84]{Stanton}
Nancy~K. Stanton.
\newblock {The heat equation in several complex variables}.
\newblock {\em Bulletin (New Series) of the American Mathematical Society},
  11(1):65 -- 84, 1984.

\bibitem[Ste93]{Stein}
Elias~M. Stein.
\newblock {\em Harmonic {A}nalysis: {R}eal-{V}ariable {M}ethods,
  {O}rthogonality, and {O}scillatory {I}ntegrals}, volume~43 of {\em Princeton
  Mathematical Series}.
\newblock Princeton University Press, Princeton, NJ, 1993.
\newblock With the assistance of Timothy S. Murphy, Monographs in Harmonic
  Analysis, III.

\bibitem[SW05]{Shaw2005HlderAl}
Mei-Chi Shaw and Lihe Wang.
\newblock {H{\"o}lder and $L^p$ estimates for $\square_b$ on CR manifolds of
  arbitrary codimension}.
\newblock {\em Mathematische Annalen}, 331:297--343, 2005.

\bibitem[TCV86]{Taylor1986}
M.E. Taylor, J.~Carmona, and M.~Vergne.
\newblock {\em Noncommutative Harmonic Analysis}.
\newblock Number pt. 1 in Mathematical surveys and monographs. American
  Mathematical Society, 1986.

\end{thebibliography}
\end{document}